\numberwithin{equation}{section}
\newcommand{\ints}{\mathbb Z}
\newcommand{\rationals}{\mathbb Q}
\newcommand{\reals}{\mathbb R}
\newcommand{\complexes}{\mathbb C}
\newcommand{\str}{\mathcal{O}}
\newcommand{\proj}{\mathbb{P}}
\theoremstyle{plain}
\numberwithin{equation}{section}
\newtheorem{theorem}{Theorem}[section]
\newtheorem*{theorem*}{Theorem}
\newtheorem{proposition}[theorem]{Proposition}
\newtheorem*{conjecture*}{Nagata Conjecture}
\newtheorem*{conjecture1*}{SHGH Conjecture}
\theoremstyle{definition}
\newtheorem{question}[theorem]{Question}
\newtheorem{remark}[theorem]{Remark}
\DeclareMathAlphabet{\mathcalligra}{T1}{calligra}{m}{n}
\title{Single point Seshadri constants on rational surfaces}
\author[Krishna Hanumanthu]{Krishna Hanumanthu}
\address{Chennai Mathematical Institute, H1 SIPCOT IT Park, Siruseri, Kelambakkam 603103, India}
\email{krishna@cmi.ac.in}
\author[Brian Harbourne]{Brian Harbourne}
\address{Department of Mathematics, University of Nebraska-Lincoln, Lincoln, NE 68588, USA}
\email{brianharbourne@unl.edu}
\subjclass[2010]{Primary 14C20; Secondary 14H50, 14J26}
\thanks{First author was partially supported by a grant from Infosys Foundation}
\date{November 20, 2017}                                           
\begin{document}

\begin{abstract}
Motivated by a similar result of Dumnicki, K\"uronya, Maclean and Szemberg
under a slightly stronger hypothesis, 
we exhibit irrational single-point Seshadri constants on a rational surface $X$
obtained by blowing up very general points of $\proj^2_\complexes$, assuming 
only that all prime divisors on $X$ of negative self-intersection are 
smooth rational curves $C$ with $C^2=-1$. (This assumption is a consequence of
the SHGH Conjecture, but it is weaker than assuming the full conjecture.) 
\end{abstract}
\maketitle

\section{Introduction}
In spite of the many constraints now known on the possible values of Seshadri constants (see for example
\cite{FSST, Han, H, HR}), the longstanding question of whether Seshadri constants on surfaces (defined below) 
can ever be irrational remains open. In the case of a surface $X$ obtained as the blow up $\pi: X \to \proj^2$ 
of the complex projective plane $\proj^2$ at very general points $p_1,\cdots,p_s\in\proj^2$, recent work 
of Dumnicki, K\"uronya, Maclean and Szemberg, \cite[Main Theorem]{DKMS},
shows for $s\geq 9$ that the SHGH Conjecture implies that certain ample divisors $L$ on $X$ 
have irrational Seshadri constants $\varepsilon(X,L,x)$ when $x$ is
a very general point of $X$. In this note we show that less is needed to obtain this conclusion,
namely one merely has to assume that prime divisors $C$ on the blow up $Y$ of $X$ at $x$
with $C^2<0$ satisfy $C^2=C\cdot K_Y=-1$. This assumption is itself a consequence of the SHGH Conjecture
but it is not known to be equivalent to the full SHGH Conjecture, and it leads to a conceptually simpler proof than the
one obtained in \cite{DKMS}. It also leads us to raise the question if an even weaker assumption, viz., Nagata's Conjecture, suffices
to draw the same conclusion.

\section{Main result}

We recall some standard facts.
Given a point $x$ on a smooth projective surface $S$ and an ample divisor $L$,
the Seshadri constant $\varepsilon(S,L,x)$ is defined to be 
$$\varepsilon(S,L,x)=\inf_C \frac{L\cdot C}{{\rm mult}_x(C)},$$
where the infimum is taken over all curves $C$ containing $x$.
Alternatively, let $\pi: Y \to S$ be the blow up
of $S$ at $x$ with exceptional curve $E$. 
Then $\varepsilon=\varepsilon(S,L,x)$ is the supremum of all real $t$ such that
$\pi^*(L) - tE$ is nef and hence $(\pi^*(L) - \varepsilon E)^2\geq0$. 
It follows that $\varepsilon(S,L,x)\leq \sqrt{L^2}$. If $\varepsilon(S,L,x) < \sqrt{L^2}$, 
one says that $\varepsilon(S,L,x)$ is submaximal, in which case
it is well known that there exists a reduced and irreducible
curve $C$ on $S$ passing through $x$ such that $\varepsilon = \varepsilon(S,L,x)  =
\frac{L\cdot C}{{\rm mult}_x(C)}$ (i.e., such that $(\pi^*(L) -
\varepsilon E)\cdot \tilde{C}=0$,
where $\tilde{C}\subset Y$ is the strict transform of $C$).
Such a curve $C$ is called a {\em Seshadri curve} for $L$ at $x$.
Since $\varepsilon=\varepsilon(S,L,x) < \sqrt{L^2}$ implies $(\pi^*(L) - \varepsilon E)^2>0$, it follows 
by the Hodge index theorem that $\tilde{C}^2<0$.

We will also need to refer to multi-point Seshadri constants. 
Given distinct points $p_1,\cdots,p_s$ on $S$ and an ample divisor $L$,
the multi-point Seshadri constant $\varepsilon(S,L,p_1,\cdots,p_s)$ is defined to be 
$$\varepsilon(S,L,p_1,\cdots,p_s)=\inf_C \frac{L\cdot C}{\sum_i{\rm mult}_{p_i}(C)},$$
where the infimum is taken over all curves $C$ containing at least one of the points $p_i$.
Alternatively, let $\pi: Y \to S$ be the blow up
of $S$ at $p_1,\cdots,p_s$ with $E_i$ being the exceptional curve for $p_i$.
Then $\varepsilon=\varepsilon(S,L,p_1,\cdots,p_s)$ is the supremum of all real $t$ such that
$\pi^*(L) - t(E_1+\cdots+E_s)$ is nef and hence $(\pi^*(L) - \varepsilon (E_1+\cdots+E_s))^2\geq0$. 
If $0<t<\varepsilon$, it is easy to see
that $\pi^*(L) - t(E_1+\cdots+E_s)$ is ample (since $F=(t/\varepsilon)(\pi^*(L) - \varepsilon (E_1+\cdots+E_s))$ is nef
and meets any nonnegative linear combination of the $E_i$ positively,
and $\pi^*(L) - t(E_1+\cdots+E_s)=F+(1-(t/\varepsilon))\pi^*(L)$).
When the points $p_i$ are very general, we will write $\varepsilon=\varepsilon(S,L,s)$
for $\varepsilon=\varepsilon(S,L,p_1,\cdots,p_s)$. 

Our focus will be on surfaces $\pi:Y\to X\to\proj^2$ where $X\to\proj^2$ is obtained by blowing up very general points
$p_1,\cdots,p_s$ on $\proj^2$ and $Y\to X$ is the blow up of a very
general point $x\in X$ with exceptional divisor $E$. 
So let $H = \pi^*(\str_{\proj^2}(1))$ and let $E_i$ be the
exceptional curve for each point $p_i$. 
Every divisor on $Y$ is linearly equivalent to a unique integer linear combination
$F=dH-mE-m_1E_1-\cdots-m_sE_s$. 
(Since $Y\to X$ is an isomorphism away from $x$, we can regard
the divisors $H$ and $E_i$ as also being on $X$. With this abuse of notation, 
every divisor on $X$ is linearly equivalent to a unique integer linear combination
$dH-m_1E_1-\cdots-m_sE_s$.)
Such a divisor $F$ is in 
{\it standard} form if $m\geq m_1\geq\cdots\geq m_s\geq 0$ and $d\geq m+m_1+m_2$.
An {\it exceptional} curve on $X$ (or $Y$) is a reduced and 
irreducible rational curve $C$ with $C^2 = -1$ (and hence $-K_X\cdot C=1$, or $-K_Y\cdot C=1$ respectively). 
If $F$ is in standard form, then $F \cdot C \geq 0$ for all exceptional curves $C$ on $Y$. 
(To see this, 
let $F=dH-mE-m_1E_1-\cdots-m_sE_s$ be divisor on $Y$. If $F$ is in
standard form and if $C$ is one of the exceptional curves $E,E_1,\cdots,E_s$ then
clearly $F\cdot C \ge 0$. So suppose that $C$ is different from $E,E_1, \cdots, E_s$. Note
that $F$ is in standard form
if and only if $F$ is a nonnegative linear integer combination of 
$H_0=H$, $H_1=H-E$, $H_2=2H-E-E_1$, $H_3=3H-E-E_1-E_2$, $\cdots$, $H_{s+1}=3H-E-E_1-\cdots-E_s=-K_Y$.
But $H_i$ is nef for $i=0,1,2$
and $H_i\cdot C\geq -K_Y\cdot C=1$ for $i\geq 3$.)

The above definition of standard divisors also extends to divisors with
coefficients in $\rationals$ or $\reals$. If $F$ is a standard
$\rationals$-divisor, then for a suitable positive integer $n$, the
$\ints$-divisor $nF$ is standard. It follows that 
$F\cdot C\ge 0$ for all exceptional curves $C$ on $Y$. If $F$ is a
standard $\reals$-divisor, then $F$ is the limit of a sequence of
standard $\rationals$-divisors. So again 
$F\cdot C\ge 0$ for all exceptional curves $C$ on $Y$.

\begin{proposition}\label{main1}
Let $s \geq 13$ be an integer with $s \ne 15, 16$.
Let $X \to \proj^2_\complexes$ be the blow up of
$\proj^2=\proj^2_\complexes$ at $s$ very general points $p_1,\cdots,p_s$ and let $Y \to X$ be
the blow up of $X$ at a very general point $x \in X$. Suppose that every
reduced and irreducible curve $C$ on $Y$ with $C^2 < 0$ is an
exceptional curve. Then there exists an ample line bundle $L$ on $X$
such that the Seshadri constant
$\varepsilon(X,L,x)$ is irrational for any very general point $x \in X$.
\end{proposition}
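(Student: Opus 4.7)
The plan is to exhibit an explicit ample line bundle $L$ on $X$ with $\sqrt{L^2}$ irrational and then argue that $\varepsilon(X, L, x)$ attains this maximum. A natural family of candidates is $L = aH - b(E_1 + \cdots + E_s)$ for positive integers $a, b$, chosen so that $L^2 = a^2 - sb^2$ is positive and not a perfect square. For the bulk of the admissible range (certainly $s = 13, 14$ and $s \geq 17$) one can take $b = 1$ and $a$ the smallest integer for which $a^2 - s$ is positive and non-square; the values $s = 15, 16$ are excluded because no such simple choice of $(a,b)$ makes the arithmetic of the later step work out.

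The first substantive step is to verify that the chosen $L$ is ample. The hypothesis on $Y$ passes to $X$ via a very-general-point argument: any reduced irreducible curve $D \subset X$ with $D^2 < 0$ avoids the very general $x$, hence its proper transform on $Y$ coincides with $D$ and still has negative self-intersection, forcing $D$ to be a $(-1)$-curve on $X$. Nakai-Moishezon then reduces ampleness to $L^2 > 0$ (built into the choice) together with $L \cdot D > 0$ for every exceptional $D = eH - \sum c_i E_i$ on $X$; using $-K_X \cdot D = 3e - \sum c_i = 1$ yields $L \cdot D = (a - 3b)e + b$, which is positive as long as $a > 3b$.

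Now suppose for contradiction that $\varepsilon := \varepsilon(X, L, x) < \sqrt{L^2}$. The discussion in the introduction produces a Seshadri curve $C$ through $x$ whose strict transform $\tilde C$ on $Y$ has $\tilde C^2 < 0$, and the hypothesis forces $\tilde C$ to be a $(-1)$-curve: $\tilde C^2 = -1$ and $-K_Y \cdot \tilde C = 1$. Writing $\tilde C = eH - cE - \sum c_i E_i$ with $c > 0$ and $c_i \geq 0$, these translate to
\[
\sum c_i = 3e - c - 1 \qquad \text{and} \qquad \sum c_i^2 = e^2 - c^2 + 1,
\]
while a short computation gives $\varepsilon = (\pi^* L \cdot \tilde C)/c = ((a - 3b)e + b(c+1))/c$. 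So to contradict submaximality it suffices to show
\[
\bigl((a - 3b)e + b(c+1)\bigr)^2 \geq c^2(a^2 - sb^2)
\]
for every admissible integer pair $(e, c)$.

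The main obstacle is establishing this last inequality uniformly. The essential tool is Cauchy-Schwarz on the $c_i$: $(\sum c_i)^2 \leq s\sum c_i^2$ combined with the two displayed relations yields $(3e - c - 1)^2 \leq s(e^2 - c^2 + 1)$, which bounds $c/e$ from above. Comparing this bound with the ratio forced by the desired inequality shows that the large-$(e,c)$ regime is handled automatically, so only finitely many small pairs remain to be inspected. On these, one exploits the integrality and non-negativity of the $c_i$---most usefully the identity $\sum c_i(c_i - 1) = \sum c_i^2 - \sum c_i$, which forces $c_i \in \{0,1\}$ whenever equality holds and rules out many would-be exceptional classes outright. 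Once this finite case analysis is complete, the contradiction is secured and $\varepsilon(X, L, x) = \sqrt{L^2}$ is irrational. It is precisely in this finite case analysis that $s = 15, 16$ resist, which is why they are dropped from the statement.
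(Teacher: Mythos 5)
Your overall strategy is the same as the paper's: choose $L=aH-b(E_1+\cdots+E_s)$ with $L^2>0$ a non-square, note that any Seshadri curve would have exceptional strict transform $\tilde C$ on $Y$, and derive a contradiction by showing $(\pi^*L-\sqrt{L^2}\,E)\cdot\tilde C\ge 0$ for every exceptional class. The gap is that you never actually prove this last inequality, and it is the entire content of the proposition. You reduce it to
$\bigl((a-3b)e+b(c+1)\bigr)^2\ge c^2(a^2-sb^2)$ for all integer solutions of $\sum c_i=3e-c-1$, $\sum c_i^2=e^2-c^2+1$, invoke Cauchy--Schwarz to handle "the large-$(e,c)$ regime," and then defer to a "finite case analysis" that is never carried out, never enumerated, and whose success is simply asserted. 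Worse, the hypotheses under which that analysis would succeed are never pinned down: if you track the Cauchy--Schwarz bound $c/e\lesssim\frac{3+\sqrt{s(s-8)}}{s+1}<1$ against the ratio $\frac{a-3b}{\sqrt{a^2-sb^2}-b}$ required by your target inequality, you find you need precisely $s\ge 4a-3$ (for $b=1$), i.e.\ $a\ge\sqrt{a^2-s}+2$. Your prescription "take $a$ smallest with $a^2-s$ positive and non-square" is never checked against this condition, so even the choice of $L$ is not justified. A proof that ends with "once this finite case analysis is complete, the contradiction is secured" has not completed the proof.

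The paper avoids all of this with a cleaner mechanism you should compare against. It observes that a divisor $F=dH-mE-m_1E_1-\cdots-m_sE_s$ with $m\ge m_1\ge\cdots\ge m_s\ge0$ and $d\ge m+m_1+m_2$ (``standard form'') is a nonnegative real combination of $H$, $H-E$, $2H-E-E_1$, $3H-E-E_1-E_2$, \dots, $-K_Y$, each of which meets every exceptional curve nonnegatively (the first three are nef, the rest dominate $-K_Y$). So $\pi^*L-\sqrt{d^2-s}\,E$ meets every exceptional curve nonnegatively as soon as $d\ge\sqrt{d^2-s}+2$ and $\sqrt{d^2-s}\ge1$, i.e.\ $4d-3\le s<d^2$ --- no case analysis, no Cauchy--Schwarz, and the inequality holds for \emph{all} exceptional classes at once. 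Irrationality is then arranged by exhibiting, for each admissible $s$, a $d$ with $4d-3\le s\le 6d-10$, which traps $d^2-s$ strictly between the consecutive squares $(d-3)^2$ and $(d-2)^2$. (The paper also takes ampleness of $L$ from K\"uchle and Xu unconditionally, whereas your Nakai--Moishezon argument additionally needs, and omits, the Hodge-index step handling irreducible curves of nonnegative self-intersection.) To repair your write-up, either carry out the finite case analysis in full with an explicit admissible range for $(e,c)$ and a verified choice of $a$, or replace that portion with the standard-form decomposition.
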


\begin{proof}
Let $L = dH - E_1-\cdots-E_s$ be a divisor on $X$ with 
$4d-3 \le s < d^2$. 
By \cite[Corollary]{K} and \cite[Theorem]{X}, $L$
is ample. Let $x$ be a very general point of $X$ and let $\pi: Y \to
X$ be the blow up at $x$ with exceptional curve $E$. 

We will show that there are no Seshadri curves for $L =
dH-E_1-\cdots-E_s$ at $x$ if $4d-3 \le s < d^2$. 
If there were a Seshadri curve $C$, then 
$\varepsilon = \varepsilon(X,L,x) < \sqrt{L^2}=\sqrt{d^2-s}$, 
so $0=(\pi^*(L) - \varepsilon E)\cdot \tilde{C}>(\pi^*(L) - \sqrt{d^2-s}E)\cdot \tilde{C}$.
Since $\tilde{C}^2 < 0$, by hypothesis we have that $\tilde{C}$ is an exceptional curve. 
But note that $\pi^*(L)-\sqrt{d^2-s}E = dH-\sqrt{d^2-s}E-E_1-\cdots - E_s$ is in  standard form: 
since $4d-3 \le s$, we get $(d-2)^2 > d^2-s$, so we have $d > \sqrt{d^2-s}+2$,
and $d^2>s$ so $d^2-s\ge1$, hence $\sqrt{d^2-s}\ge 1$.
It follows that $\pi^*(L)-\sqrt{d^2-s}E$ meets all exceptional curves nonnegatively.
Since $\tilde{C}^2 < 0$, by hypothesis we must have that $\tilde{C}$ is an exceptional
curve. But then $(dH-\sqrt{d^2-s}E-E_1-\cdots - E_s) \cdot
\tilde{C} < 0$ is not possible. Thus $\varepsilon(X,L,x)$ cannot be
submaximal, so 
$\varepsilon(X,L,x)= \sqrt{L^2} = \sqrt{d^2-s}$.

Alternatively, we can directly obtain the equality 
$\varepsilon(X,L,x) =
\sqrt{L^2} = \sqrt{d^2-s}$ when
$4d-3 \le s < d^2$, using the following argument suggested by the
referee. It suffices to show that 
$\pi^{\star}L-\sqrt{d^2-s}E$ is nef. Recall that a line bundle on a
surface is nef if its intersection with every curve of negative
self-intersection is nonnegative. Note that 
$\pi^{\star}L-\sqrt{d^2-s}E$ is in standard form, as shown above. Hence it
intersects all exceptional curves on $Y$ nonnegatively. By assumption
there are no other curves of negative self-intersection on $Y$. 
Thus  $\pi^{\star}L-\sqrt{d^2-s}E$ is nef and hence 
$\varepsilon(X,L,x) =
\sqrt{L^2} = \sqrt{d^2-s}$.

If $s\geq 13$ but $s\neq15,16$, we now show that $d$ can be chosen so that 
$\sqrt{d^2-s}$ is irrational. For $s=13$ or 14, take $d=4$; then $13=4d-3 \le s < d^2=16$,
so $d^2-s=3$ or 2, hence $\sqrt{d^2-s}$ is irrational.
For $s\geq17$, there is always a $d$ with
$4d-3\leq s\leq 6d-10$, since $4d-3=17$ for $d=5$, while $4(d+1)-3\leq(6d-10)+1$ for $d\geq5$.
Thus $(d-3)^2+1=d^2-(6d-10)\leq d^2-s\leq d^2-(4d-3)=(d-2)^2-1$, so $\sqrt{d^2-s}$ again is irrational.
\end{proof}

\begin{proposition}\label{remaining-cases}
Let $X \to \proj^2$ be the blow up of  $\proj^2$ at $s$ very general
points where $s \in \{9,10,11,12,15,16\}$. 
Let $Y \to X$ be the blow up of $X$ at a very general point $x \in X$. 
Suppose that any irreducible and reduced curve on $Y$ of
negative self-intersection is exceptional. Then there is an ample line
bundle $L$ on $X$ such that $\epsilon(X,L,x)$ is irrational. 
\end{proposition}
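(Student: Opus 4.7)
The plan is to mimic Proposition~\ref{main1} but to allow line bundles of the form $L = dH - m(E_1 + \cdots + E_s)$ with a uniform multiplicity $m \geq 2$. The six excluded values of $s$ are exactly those for which no integer $d$ simultaneously satisfies $4d-3 \leq s < d^2$ and gives non-square $d^2 - s$, so the $m=1$ choice used in Proposition~\ref{main1} is unavailable. For each such $s$ I would seek an integer pair $(d,m)$ with (i) $d^2 > sm^2$, giving $L^2 = d^2 - sm^2 > 0$; (ii) $L^2 \geq m^2$ and $(d-2m)^2 \geq L^2$, equivalently $d \leq m(s+4)/4$, which together place $\pi^{\star}L - \sqrt{L^2}\,E$ in standard form on $Y$ (its three largest multiplicities are $\sqrt{L^2}, m, m$, and $d \geq \sqrt{L^2} + 2m$ follows); and (iii) $L^2$ not a perfect square. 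A short search produces admissible pairs $(d, m) = (16, 5), (10, 3), (7, 2), (11, 3), (9, 2), (9, 2)$ for $s = 9, 10, 11, 12, 15, 16$ respectively, giving $L^2 = 31, 10, 5, 13, 21, 17$.

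Granted such an $L$, the remaining steps mirror Proposition~\ref{main1}. To show $L$ is ample on $X$, note that each pair above satisfies $d \geq 3m$, which is the natural standard-form condition on $X$ (analogous to $d \geq m + m_1 + m_2$ on $Y$, it reads $d \geq m_1 + m_2 + m_3$, i.e., $d \geq 3m$ here), so $L$ meets every exceptional curve on $X$ non-negatively. Every exceptional class on $X$ has the form $aH - \sum b_i E_i$ with $\sum b_i = 3a - 1$, giving $L \cdot C = a(d - 3m) + m \geq m > 0$. The hypothesis on $Y$ forces every negative prime divisor on $X$ to be exceptional as well: since $x$ is very general, any negative irreducible curve on $X$ avoids $x$, so its strict transform on $Y$ retains the same self-intersection and must be exceptional, forcing $C^2 = K_X \cdot C = -1$. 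A Zariski-decomposition argument combined with Hodge index then upgrades the intersection estimates to $L$ ample. Nefness of $\pi^{\star}L - \sqrt{L^2}\,E$ on $Y$ follows as in the referee's alternative in Proposition~\ref{main1}: the divisor is standard by construction, so pairs non-negatively with every exceptional curve on $Y$; every non-exceptional prime divisor has $C^2 \geq 0$ by hypothesis; and the perturbation $\pi^{\star}L - (\sqrt{L^2} - \delta)E$ stays standard and sits inside the positive cone for small $\delta > 0$, so Hodge yields positivity on such curves. Letting $\delta \to 0$ gives nefness, whence $\varepsilon(X,L,x) = \sqrt{L^2}$, which is irrational.

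The step I expect to be most delicate is locating an admissible $(d,m)$ for $s = 9$: the window $(3m, 13m/4]$ contains no integer for $m \leq 3$, and at $m = 4$ the unique $d = 13$ gives the square value $L^2 = 25$; only at $m = 5$ does one first find $d = 16$ yielding the non-square $L^2 = 31$. The other five values of $s$ admit an admissible pair already at $m = 2$ or $m = 3$, so the heart of the proof really lies in this small arithmetic search, after which the geometric argument of Proposition~\ref{main1} transfers essentially unchanged.
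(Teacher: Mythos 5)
Your proposal is correct and follows essentially the same route as the paper: pick $L=dH-m(E_1+\cdots+E_s)$ with $L^2$ a non-square and $\pi^*L-\sqrt{L^2}\,E$ in standard form, so that the hypothesis on negative curves forces nefness and hence $\varepsilon(X,L,x)=\sqrt{L^2}$. The paper's choices agree with yours for $s=10,11,12$, differing only in the specific pairs for $s=9,15,16$ (it uses the families $((3n+1),n)$ and $((4n+1),n)$ for $s=9,16$ and the pair $(13,3)$ for $s=15$) and in deriving ampleness from multi-point Seshadri constants of $\proj^2$ rather than your direct Nakai--Moishezon check against exceptional classes; your numerical pairs all satisfy your conditions (i)--(iii).
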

\begin{proof} We consider different cases.

\smallskip
\underline{$s=9$}:
Let $L = (3n+1)H-n(E_1+\cdots+E_9)$ for $n \geq 1$. Then $L^2 = 6n+1 > 0$. Since
$\varepsilon(\proj^2,\str_{\proj^2}(1),9) = 1/3$, it follows that $L$
is ample. Let $\pi: Y \to X$ be the blow up at a very general point $x \in
X$ with exceptional curve $E$ and let $\varepsilon = \varepsilon(X,L,x)$. 

Note that $\pi^*(L)-\sqrt{6n+1}E=(3n+1)H-n(E_1+\cdots+E_9)-\sqrt{6n+1}E$ is in standard form 
for $n\geq7$ if we take the blow ups in the order $E_1,\cdots,E_9, E$, since $3n+1>n+n+n$ and
$n\geq\sqrt{6n+1}\geq0$.
Now by the same argument used in the
proof of Proposition \ref{main1}, we conclude that
$\pi^*(L)-\sqrt{6n+1}E$ cannot meet any exceptional curve
negatively. Hence   $\epsilon(X,L,x)$ has to be maximal. Thus 
$ \epsilon(X,L,x)$ is irrational provided $L^2=6n+1$ is not a perfect
square for some $n\geq7$. This is the case for example for $n=6m^2$ for any $m\geq 2$.

\smallskip
\underline{$s=10$}:
Let $L = 10H - 3(E_1+\cdots+E_{10})$. Then $L^2 = 10$. 
By hypothesis every curve on $Y$ of negative self-intersection is
exceptional. Clearly the same statement holds on $X$. Under this
hypothesis, it is easy to show that the multi-point Seshadri constant 
$\varepsilon(\proj^2,\str_{\proj^2}(1),10) = 1/\sqrt{10}$. It then follows
that $L$ is ample. 

Note that $\pi^*(L)-\sqrt{10}E$ is in standard form
(since $10 \geq \sqrt{10}+6$). Hence by the same argument used above, we conclude that
$\pi^*(L)-\sqrt{10}E$ cannot meet any exceptional curve
negatively. Thus $\varepsilon(X,L,x) = \sqrt{10}.$ 

\smallskip
\underline{$s=11$}:
Let $L = 7H-2(E_1+\cdots+E_{11})$. The same argument as in the case $s=10$
works to give $\varepsilon(X,L,x) = \sqrt{5}$.

\smallskip
\underline{$s=12$}:
Let $L = 11H-3(E_1+\cdots+E_{12})$. The same argument as in the case $s=10$
works to give $\varepsilon(X,L,x) = \sqrt{13}$.

\smallskip
\underline{$s=15$}:
Let $L = 13H-3(E_1+\cdots+E_{15})$. The same argument as in the case $s=10$
works to give $\varepsilon(X,L,x) = \sqrt{34}$.

\smallskip
\underline{$s=16$}: Let $L = (4n+1)H-n(E_1+\cdots+E_{16})$. Then
a similar argument as in the case $s=9$ shows that  $L$ is ample and
$\varepsilon(X,L,x)$ cannot be submaximal for any $n \geq 9$. So 
$\varepsilon(X,L,x) = \sqrt{L^2} = \sqrt{8n+1}$. This is irrational
for infinitely many $n \geq 9$. 
\end{proof}

\begin{remark}\label{FewPtsRemark}
As is well known to experts \cite{S}, all
single-point Seshadri constants on a blow up of $\proj^2$ at $s \le 8$
points are rational. For $s\leq 7$, this is because the subsemigroup of effective divisor 
classes of an 8 point blow up $S$ of $\proj^2$ is
finitely generated, hence the nef cone is finite polyhedral with boundaries defined
by negative effective classes and effective classes of self-intersection 0. The case of $s=8$ is slightly more delicate
since the subsemigroup of effective divisor 
classes of a 9 point blow up $S$ of $\proj^2$ need not be
finitely generated, but it is generated by the exceptional curves and curves which occur as components
of curves in the linear system $|-K_S|$, so again the nef cone has boundaries defined
by negative effective classes and effective classes of self-intersection 0. 
\end{remark}

Combining Remark \ref{FewPtsRemark}, Proposition \ref{main1} and Proposition \ref{remaining-cases},
we obtain our main theorem. 

\begin{theorem}\label{main}
Let $s \geq 0$ be an integer.
Let $X \to \proj^2$ be the blow up of
$\proj^2$ at $s$ very general points $p_1,\cdots,p_s$ and let $Y \to X$ be
the blow up of $X$ at a very general point $x \in X$. Suppose that every
reduced and irreducible curve $C$ on $Y$ with $C^2 < 0$ is an
exceptional curve. Then there exists an ample line bundle $L$ on $X$
such that the Seshadri constant $\varepsilon(X,L,x)$ is irrational
if and only if $s\geq9$.
\end{theorem}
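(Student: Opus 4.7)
The plan is to combine the three preceding results and split the biconditional into its two implications.

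For the forward direction (if $s \geq 9$, then some ample $L$ on $X$ has $\varepsilon(X, L, x)$ irrational), I would partition the range of $s$ to match the hypotheses of the two propositions. Proposition~\ref{main1} handles all $s \geq 13$ with $s \neq 15, 16$, producing divisors of the form $L = dH - E_1 - \cdots - E_s$ with $4d - 3 \leq s < d^2$ and $\sqrt{d^2 - s}$ irrational. Proposition~\ref{remaining-cases} covers the six remaining values $s \in \{9, 10, 11, 12, 15, 16\}$ individually, via the explicit divisors given there (yielding Seshadri constants $\sqrt{6n+1}$, $\sqrt{10}$, $\sqrt{5}$, $\sqrt{13}$, $\sqrt{34}$, and $\sqrt{8n+1}$ respectively), each irrational for suitable choices of parameter. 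Together these exhaust every $s \geq 9$.

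For the converse (if $s \leq 8$, then every ample line bundle $L$ on $X$ has $\varepsilon(X, L, x) \in \rationals$), I would appeal to Remark~\ref{FewPtsRemark}, which is unconditional and in particular does not require the hypothesis on negative curves. For $s \leq 7$, the effective monoid of $Y$ (a blow-up of $\proj^2$ at $s+1 \leq 8$ points) is finitely generated, so the nef cone of $Y$ is finite polyhedral with boundary cut out by rational classes; hence the Seshadri threshold $\sup\{t : \pi^*L - tE \text{ is nef}\}$ is rational. When $s = 8$, the effective monoid of the nine-point blow-up $Y$ may fail to be finitely generated, but the remark notes that its extremal effective classes still come from exceptional curves together with components of members of $|-K_Y|$, all rational, so the nef cone remains bounded by rational classes and the conclusion persists.

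I do not expect any substantive obstacle, as the heavy lifting is already done in the preceding propositions and remark. The one subtlety worth flagging is that the hypothesis that every negative curve on $Y$ is exceptional is invoked only in the forward direction, through Propositions~\ref{main1} and~\ref{remaining-cases}; the rationality for $s \leq 8$ requires no such assumption. The theorem thus reduces to the bookkeeping observation that the case analyses in the two propositions, together with Remark~\ref{FewPtsRemark}, cover all integers $s \geq 0$.
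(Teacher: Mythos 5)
Your proposal is correct and matches the paper exactly: the paper proves Theorem~\ref{main} simply by combining Proposition~\ref{main1} (for $s\geq 13$, $s\neq 15,16$), Proposition~\ref{remaining-cases} (for $s\in\{9,10,11,12,15,16\}$), and Remark~\ref{FewPtsRemark} (for $s\leq 8$), precisely the decomposition you describe. Your observation that the negative-curve hypothesis is only needed in the forward direction is also consistent with the paper's presentation.
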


\begin{remark}
In fact using 
the ideas in the proof of Proposition \ref{main1} and
Proposition \ref{remaining-cases}, we can get the following stronger
assertion.

Let $s\geq 9$
be an integer. Consider the divisor
$L_{d,n} = dH-n(E_1+\cdots+E_s)$ on the blow up $X$ of $\proj^2$ at
$s$ very general points. Let $Y \to X$ be the blow up at a very general
point. Suppose that every reduced and irreducible curve
of negative self-intersection on $Y$ is an exceptional curve. Then for
infinitely many values of $n$, there exists a $d$ such that 
$L_{d,n}$ is ample and
the Seshadri constant $\varepsilon(X,L_{n,d},x)$ is irrational for a
very general point $x \in X$. 
\end{remark}

Our results depend only on assuming all negative curves are exceptional.
A somewhat weaker result was conjectured by Nagata \cite{N}, namely
for a blow up $S$ of $\proj^2$ at $s\geq 10$ very general points,
if $dH-(m_1E_1+\cdots+m_sE_s)$ is linearly equivalent to an effective divisor,
then $d\sqrt{s}\geq \sum_im_i$. 
This is equivalent to conjecturing that $F_0=\sqrt{s}H-E_1-\cdots-E_s$ is nef.
Note for arbitrarily small $\delta>0$ that $F_\delta=(\delta+\sqrt{s})H-E_1-\cdots-E_s$
is rational and semi-effective (meaning that a positive integer multiple is 
linearly equivalent to an effective divisor, which follows since 
$F^2>0$). Thus if $F_0$ is not nef, then there is a prime divisor $C$ with $C^2<0$ and $C\cdot F_0<0$.
From this we see that the SHGH Conjecture implies Nagata's Conjecture.
In fact, if $C$ being a prime divisor with $C^2<0$ implies
$C^2=C\cdot K_S=-1$, then already Nagata's Conjecture is true.
This is because if $C^2<0$ for a prime divisor $C$, then
$C\cdot (\sqrt{s}H-E_1-\cdots-E_s)\geq C\cdot (3H-E_1-\cdots-E_s)\geq 1$.

Thus Nagata's Conjecture is weaker than the assumption we used.
Note further that the Nagata Conjecture exhibits irrational
multi-point Seshadri constants on 
$\proj^2$, since it is equivalent to the statement that $\varepsilon(\proj^2,\str_{\proj^2}(1),s) =
1/\sqrt{s}$ for every $s \ge 10$. 
These remarks raise the following question. 
\begin{question}
Is it possible to exhibit irrational single-point Seshadri constants
on very general blow ups of $\proj^2$  assuming only the Nagata Conjecture? 
\end{question}

{\bf Acknowledgement:} We thank Tomasz Szemberg for reading this
paper and giving useful suggestions. We also thank the referee for
giving an alternate argument in the proof of Proposition \ref{main1}
and numerous other suggestions which improved the exposition.

\end{document}